\numberwithin{equation}{section}
\newtheorem{lemma}{Lemma}[section]
\newtheorem{thm}[lemma]{Theorem}
\newtheorem{cor}[lemma]{Corollary}
\theoremstyle{definition}
\newtheorem{example}[lemma]{Example}
\theoremstyle{remark}
\newtheorem{remark}[lemma]{Remark}
\def\R{\mathbb{R}}
\def\N{\mathbb{N}}
\def\t{\mathbf{t}}
\def\D{\mathcal{D}}
\def\O{\mathcal{O}}
\def\a{\mathbf{a}}
\def\b{\mathbf{b}}
\def\p{\mathbf{p}}
\def\I{\mathcal{I}}
\def\x{\mathbf{x}}
\def\t{\mathbf{t}}
\def\v{\mathbf{v}}
\def\u{\mathbf{u}}
\numberwithin{equation}{section} \numberwithin{table}{section}
\title{Iterated function systems with super-exponentially close cylinders II}
\author{Simon Baker\\ \\
\emph{School of Mathematics,} \\ \emph{University of Birmingham,} \\ \emph{Birmingham,  B15 2TT, UK.} \\ Email: simonbaker412@gmail.com\\}
\date{\today}
\begin{document}
\maketitle

\begin{abstract}
Until recently, it was an important open problem in Fractal Geometry to determine whether there exists an iterated function system acting on $\R$ with no exact overlaps for which cylinders are super-exponentially close at all small scales. Iterated function systems satisfying these properties were shown to exist by the author and by B\'{a}r\'{a}ny and K\"{a}enm\"{a}ki. In this paper we prove a general theorem on the existence of such iterated function systems within a parameterised family. This theorem shows that if a parameterised family contains two independent subfamilies, and the set of parameters that cause exact overlaps satisfies some weak topological assumptions, then the original family will contain an iterated function system satisfying the desired properties. We include several explicit examples of parameterised families to which this theorem can be applied.
\\


\noindent \emph{Mathematics Subject Classification 2010}: 28A80, 37C45.\\

\noindent \emph{Key words and phrases}: Overlapping iterated function systems, self-similar measures, exact overlaps.

\end{abstract}

\section{Introduction}
Let $\Phi=\{\phi_i:\R^d\to \R^d\}_{i\in\I}$ denote a finite set of contracting similarities acting on $\mathbb{R}^d$. We call $\Phi$ an iterated function system or IFS for short. Iterated function systems are useful tools for generating fractal sets. A well known result due to Hutchinson \cite{Hut} states that for any IFS $\Phi$ there exists a unique non-empty compact set $X\subset \R^d$ satisfying $$X=\bigcup_{i\in \I} \phi_i(X).$$ We call $X$ the self-similar set of $\Phi.$ Self-similar sets often exhibit fractal properties. The  middle-third Cantor set and the von-Koch curve are well known examples of self-similar sets. 

A well studied and difficult problem is to determine the Hausdorff dimension of a general self-similar set. Given an IFS $\Phi=\{\phi_i\}_{i\in \I}$ we denote by $r_i$ the similarity ratio of $\phi_i$. We call the unique $s\geq 0$ satisfying $\sum_{i\in \I}r_i^s=1$ the similarity dimension of $\Phi$ and denote it by $\dim_{S}\Phi$. The following upper bound for the Hausdorff dimension of a self-similar set is well known 
\begin{equation}
\label{similarity dimension inequality}
\dim_{H}(X)\leq \min\{\dim_{S}\Phi,d\}.
\end{equation} It is often the case that equality holds in \eqref{similarity dimension inequality}. To make progress with the problem of determining the Hausdorff dimension of a self-similar set one often uses self-similar measures. These are defined as follows: Given an IFS $\Phi$ and a probability vector $\p=(p_i)_{i\in \I}$, then there exists a unique Borel probability measure $\mu_{\p}$ satisfying $$\mu_{\p}=\sum_{i\in \I}p_i\cdot \mu_{\p}\circ \phi_{i}^{-1}.$$ We call $\mu_{\p}$ the self-similar measure corresponding to $\Phi$ and $\p$. We define the dimension of a Borel probability measure $\mu$ to be $$\dim \mu:=\inf\{\dim_{H}(A):\mu(A)=1\}.$$ We remark that there are other well-studied notions of dimension for measures. Importantly for self-similar measures these alternative definitions typically give the same value as our definition. This is a consequence of the exact dimensionality of self-similar measures (see \cite{FengHu}). The following upper bound for the dimension of a self-similar measure is well known
\begin{equation}
\label{similarity dimension for measures}
\dim \mu_{\p}\leq \min\left\{\frac{\sum_{i\in \I}p_i\log p_i}{\sum_{i\in \I}p_i\log r_i},d\right\}.
\end{equation} Note that if we take $\p=(r_i^{\dim_{S}\Phi})_{i\in \I}$ then equality in \eqref{similarity dimension for measures} implies equality in \eqref{similarity dimension inequality}. Self-similar measures are often easier to analyse then self-similar sets. As such, proving that equality holds in \eqref{similarity dimension for measures} often provides an easier route to proving equality in \eqref{similarity dimension inequality}.

Self-similar measures are well known examples of multifractal measures, i.e. measures which exhibit different rates of scaling on small balls. A useful tool for describing the multifractal behaviour of a measure is provided by the $L^q$ dimension. Let $q>1$ and $\mu$ be a Borel probability measure on $\mathbb{R}^d,$ we define the $L^q$ dimension of $\mu$ to be
$$D(\mu,q):=\liminf_{n\to\infty}\frac{-\log \sum_{(j_1,\ldots,j_d)\in \mathbb{Z}^d}\mu([j_1\cdot2^{-n},(j_1+1)\cdot2^{-n})\times \cdots \times[j_d\cdot2^{-n},(j_d+1)\cdot2^{-n}))^q }{n}.$$ Let $T(\mu_{\p},q)$ be the unique solution to the equation $\sum_{i\in \I}p_i^q r_i^{-s}=1.$ The following upper bound for the $L^q$ dimension of a self-similar measure always holds
\begin{equation}
\label{Lq upper bound}
D(\mu_{\p},q)\leq \min\left\{\frac{T(\mu_{\p},q)}{q-1},d\right\}.
\end{equation}

Determining when we have equality in \eqref{similarity dimension inequality}, \eqref{similarity dimension for measures}, and \eqref{Lq upper bound} is an active and important area of research (see \cite{Hochman2, Hochman,Hochman3,Rap,Shm,Shm2,Varju} and the references therein). When $\Phi$ is an IFS acting on $\mathbb{R}$ the only known mechanism preventing equality in \eqref{similarity dimension inequality}, \eqref{similarity dimension for measures}, or \eqref{Lq upper bound} is the 
existence of distinct $\a,\b\in \cup_{n=1}^{\infty}\I^n$ such that $\phi_{\a}=\phi_{\b}.$ When such an $\a$ and $\b$ exists we say that $\Phi$ contains an exact overlap. Here and throughout we adopt the notational convention that $\phi_{\a}:=\phi_{a_1}\circ\cdots \circ \phi_{a_n}$ for $\a=(a_1,\ldots,a_n)$. In higher dimensions there are other mechanisms preventing equality. In particular an IFS could force the self-similar set into a lower dimensional affine subspace of $\R^d$ without it containing an exact overlap. For more on the mechanisms preventing equality in higher dimensions see \cite{Hochman3}. For iterated function systems acting on $\R,$ it is conjectured that the only mechanism preventing equality in \eqref{similarity dimension inequality}, \eqref{similarity dimension for measures}, and \eqref{Lq upper bound} is the presence of an exact overlap. This conjecture is commonly referred to as the exact overlaps conjecture. Significant progress on this conjecture has been made in recent years. Rapaport in \cite{Rap} proved that if $\Phi$ is an IFS acting on $\R$ with algebraic similarity ratios, then either $\Phi$ contains an exact overlap or we have equality in \eqref{similarity dimension inequality} and \eqref{similarity dimension for measures} for all self-similar measures. Building upon the work of Hochman in \cite{Hochman}, Varju recently proved that for the unbiased Bernoulli convolution we have equality in \eqref{similarity dimension for measures} when the underlying IFS does not contain an exact overlap \cite{Varju}. The motivation behind this work comes from two important results on this topic due to Hochman \cite{Hochman} and Shmerkin \cite{Shm2}. Combining their results it follows that if $\Phi$ is an IFS acting on $\R$ and we have strict inequality in one of \eqref{similarity dimension inequality}, \eqref{similarity dimension for measures}, or \eqref{Lq upper bound}, then $\Phi$ comes extremely close to containing an exact overlap. This notion of closeness is provided by the following distance function. Given two contracting similarities $\phi(x)=rx +t$ and $\phi'(x)=r'x+t',$ we let $$d(\phi,\phi'):=\begin{cases}
\infty,\, &r\neq r'\\
|t-t'|,\, &r=r'.
\end{cases}$$ Importantly $d(\phi,\phi')=0$ if and only if $\phi=\phi'.$ Given an IFS $\Phi$, for any $n\in \N$ we let $$\Delta_n(\Phi):=\min\{d(\phi_{\a},\phi_{\b}):\, \a,\b\in \I^n,\, \a\neq \b\}.$$ The results of Hochman and Shmerkin can now be more accurately summarised as follows. 
\begin{thm}
	\label{HocShm}Let $\Phi$ be an IFS acting on $\R$.  
	\begin{itemize}
		\item \cite[Theorem 1.1]{Hochman} If $\limsup_{n\to\infty}\frac{\log \Delta_n}{n}>-\infty$ then we have equality in \eqref{similarity dimension inequality} and \eqref{similarity dimension for measures} for all self-similar measures.
		\item \cite[Theorem 6.6]{Shm2} If $\limsup_{n\to\infty}\frac{\log \Delta_n}{n}>-\infty$ then we have equality in \eqref{Lq upper bound} for all self-similar measures and $q>1$.
	\end{itemize}
\end{thm}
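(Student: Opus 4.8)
The statement is a compilation of two deep theorems, so the plan is ultimately to quote \cite[Theorem 1.1]{Hochman} and \cite[Theorem 6.6]{Shm2}; still, it is worth recording the common shape of the two arguments. Both are proved by contraposition: one assumes strict inequality in \eqref{similarity dimension for measures} (respectively \eqref{Lq upper bound}) for some self-similar measure $\mu_{\p}$ and deduces that $\Delta_n(\Phi)$ must decay faster than $e^{-cn}$ for every $c>0$. Taking the contrapositive yields equality; and, for the dimension statement, specialising the conclusion to $\p=(r_i^{\dim_{S}\Phi})_{i\in\I}$ as already noted gives equality in \eqref{similarity dimension inequality} as well.

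For the dimension half I would represent $\mu_{\p}$ as the weak-$*$ limit of the finitely supported measures $\mu_{\p}^{(n)}=\sum_{\a\in\I^{n}}p_{\a}\,\delta_{\phi_{\a}(0)}$ (writing $p_{\a}=p_{a_1}\cdots p_{a_n}$) and study the normalised dyadic entropies $\tfrac1m H\!\left(\mu_{\p}^{(n)},\mathcal{D}_m\right)$ at scale $2^{-m}$. A conditional-entropy differentiation argument shows that along a density-one set of scales the entropy increases at rate $\min\{h_{\p}/\chi_{\p},1\}$ unless there is a genuine loss of entropy when the contribution of the first few digits is convolved with that of the tail. If $\dim\mu_{\p}$ is strictly below $\min\{h_{\p}/\chi_{\p},1\}$, such a deficit must occur at a positive proportion of scales, and here one invokes Hochman's inverse theorem for the growth of entropy under convolution on $\R$: an entropy deficit forces one of the two measures being convolved to be concentrated, at most scales, on tiny neighbourhoods of an approximate arithmetic progression. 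In the self-similar setting this concentration says precisely that very many of the maps $\{\phi_{\a}:\a\in\I^{n}\}$ lie within a distance decaying faster than every exponential in $n$, i.e.\ $\Delta_n\to 0$ super-exponentially.

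For the $L^q$ half I would run the same scheme with the $q$-th moment sums from the definition of $D(\mu_{\p},q)$ in place of entropy, using that $\mu_{\p}$ is, up to affine rescalings, a self-convolution of its pieces, so the $L^q$ quantities are sub-multiplicative under convolution and would reproduce $T(\mu_{\p},q)/(q-1)$ were there no loss; a drop in the $L^q$ norm of a convolution is then converted, via Shmerkin's inverse theorem for $L^q$ norms (the counterpart of Hochman's result, built on the discretised incidence/projection machinery), into the same rigid near-coincidence of the $\phi_{\a}$. The main obstacle in both halves --- and the reason these are genuine theorems rather than routine estimates --- is exactly the inverse theorem: upgrading a quantitative analytic deficiency (entropy loss, or an $L^q$-norm drop) into rigid additive structure among the $\phi_{\a}$ at \emph{all} scales simultaneously, with bounds sharp enough to force super-exponential decay. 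Since both inverse theorems are established in \cite{Hochman} and \cite{Shm2}, for our purposes the statement follows directly from those references.
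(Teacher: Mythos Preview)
Your proposal is correct and matches the paper's treatment: the paper does not prove Theorem~\ref{HocShm} at all, it merely records it as a quotation of \cite[Theorem~1.1]{Hochman} and \cite[Theorem~6.6]{Shm2}. Your expository sketch of the contrapositive/inverse-theorem strategy goes beyond what the paper itself provides, but since the intended ``proof'' here is simply to cite those references, there is nothing to compare.
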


With Theorem \ref{HocShm} and the exact overlaps conjecture in mind, it is natural to ask whether there exists an IFS $\Phi,$ such that $\Phi$ does not contain an exact overlap yet $\Delta_n(\Phi)\to 0$ super-exponentially fast. This question was posed by Hochman in \cite{Hochman2}. If no such $\Phi$ exists then Theorem \ref{HocShm} would imply the exact overlaps conjecture. Using ideas from \cite{Bak} it was shown in \cite{Bak2} that such $\Phi$ do exist. Interestingly it was shown that $(\Delta_n(\Phi))_{n=1}^{\infty}$ can converge to zero arbitrarily fast without $\Phi$ containing an exact overlap. At the same time and using a different method B\'{a}r\'{a}ny and K\"{a}enm\"{a}ki obtained the same result \cite{BarKae}. Recently Chen \cite{Chen} altered the construction given in \cite{Bak2} to allow for algebraic contraction ratios. 

In this paper we prove a general result for the existence of an IFS $\Phi$ within a parameterised family of IFSs such that $(\Delta_{n}(\Phi))_{n=1}^{\infty}$ converges to zero arbitrarily fast and $\Phi$ does not contain an exact overlap. This serves several purposes. First of all, the argument given is more general and intuitive then the one presented in \cite{Bak2}. The argument shows that if a parameterised family of IFSs contains two independent subfamilies, then it is reasonable to expect that the original family will contain a $\Phi$ satisfying our desired properties. Secondly, our more general result provides new examples of $\Phi$ satisfying these properties. In the final section of this paper we include several examples of families to which our result can be applied. 



\section{Preliminaries and our main result}

In the statement of our main result we will be working in $\R^d$ and so require a higher dimensional analogue of $d(\cdot,\cdot)$ and $\Delta_n(\cdot)$. It is a well known fact that any contracting similarity $\phi$ acting on $\R^d$ can be uniquely written as $\phi(\x)=r\cdot O\x+t$ for some $r\in(0,1)$, $O\in \O(d),$ and $\t\in\R^d$. Here $\O(d)$ denotes the group of $d\times d$ orthogonal matrices. As such, given two contracting similarities $\phi(\x)=r\cdot O\x+t$ and $\phi'(\x)=r'O'\x+t'$ the following quantity is well defined $$d(\phi,\phi')=|t-t'|+|\log r - \log r'|+\|O-O'\|.$$ Here $\|\cdot\|$ denotes the operator norm. This is the distance function defined by Hochman in \cite{Hochman3}. Importantly it has the property that $d(\phi,\phi')=0$ if and only if $\phi=\phi'$. Given an IFS $\Phi$ and $n\in \N$ we define $$\Delta_{n}(\Phi):=\min\{d(\phi_{\a},\phi_{\b}):\a,\b\in \I^n, \a\neq \b\}.$$
Note that the function $d(\cdot,\cdot)$ behaves differently to the distance function defined previously for similarities acting on $\R.$ With this new distance function it is possible for $d(\phi,\phi')$ to take small values for $\phi$ and $\phi'$ with different similarity ratios. If we equip $\O(d)$ with the topology obtained under the usual identification with a subset of $\R^{d\times d},$ and identify the space of contracting similarities with $(0,1)\times \O(d)\times \R^d$, then $d(\cdot,\cdot)$ is a continuous function from the space of pairs of similarities into $[0,\infty)$. This fact will be important in our proofs.

Our main result will be phrased in terms of parameterised families of IFSs. The general framework we use for such a parameterisation is as follows. Let $U\subset \R^{k_1}$ and $V\subset \R^{k_2}$ be open subsets of their respective Euclidean spaces. Let $\I_1,$ $\I_2$ and $\I_3$ be finite sets. We assume that for each $i\in \I_1$  there exists continuous functions $O_{i,1}:U\to \O(d)$, $r_{i,1}:U\to (0,1),$ and $t_{i,1}:U\to \R^d$. Similarly, for  each $i\in \I_2$ we assume that there exists continuous functions $O_{i,2}:V\to \O(d)$, $r_{i,2}:V\to (0,1),$ and $t_{i,2}:V\to \R^d$. Also, for each $i\in I_3$ we assume that there exists continuous functions $O_{i,3}:U\times V\to \O(d)$, $r_{i,3}:U\times V\to (0,1),$ and $t_{i,3}:U\times V\to \R^d.$ Equipped with these functions we can define three parameterised families of iterated function systems. Given $u\in U$ we let $$\Phi_{u}:=\{\phi_{i,u}(x)=r_{i,1}(u)\cdot O_{i,1}(u) x +t_{i,1}(u)\}_{i\in \I_1},$$ and similarly given $v\in V$ we let $$\Phi_{v}:=\{\phi_{i,v}(x)=r_{i,2}(v)\cdot O_{i,2}(v) x +t_{i,2}(v)\}_{i\in \I_2}.$$ Given $(u,v)\in U\times V$ we define $$\Phi_{u,v}:=\Phi_{u}\cup \Phi_{v}\cup \{\phi_{i,u,v}(x)=r_{i,3}(u,v)\cdot O_{i,3}(u,v)x+t_{i,3}(u,v)\}_{i\in \I_3}.$$ Note that all three of these iterated function systems are acting on $\R^d$. In our applications we may simply take $\I_3=\emptyset$ and $\Phi_{u,v}=\Phi_{u}\cup \Phi_{v}.$

We define $$H_{1}(n):=\{u\in U:\phi_{\a,u}=\phi_{\b,u} \textrm{ for some }\a,\b\in \I_1^{n}, \a\neq \b\}$$ and $$H_{1}^{*}(n):=H_1(n)\setminus \cup_{j=1}^{n-1}H_{1}(j).$$ We use $H_{2}(n)$ and $H_{2}^{*}(n)$ to denote the corresponding sets for the family $\{\Phi_v\}_{v\in V},$ and $H_3(n)$ and $H_3^{*}(n)$ to denote the corresponding sets for the family $\{\Phi_{u,v}\}_{(u,v)\in U\times V}.$  

The following statement is the main result of this paper.
\begin{thm}
	\label{Main thm}
	Let $\{\Phi_u\}_{u\in U},\{\Phi_v\}_{v\in V},$ and $\{\Phi_{u,v}\}_{(u,v)\in U\times V}$ be parameterised families of iterated function systems as defined above. Suppose that the following properties are satisfied:
	\begin{enumerate}
			\item $\cup_{n=1}^{\infty}H_{1}(n)\neq \emptyset$ and $\cup_{n=1}^{\infty}H_{2}(n)\neq \emptyset.$
			\item For any $u\in H_{1}(n_0),$ $v\in H_{2}(m_0),$ and $\epsilon>0$, there exists $u_1\in \cup_{n=\max\{n_0,m_0\}+1}^{\infty}H_{1}^*(n)$ such that:
			\begin{itemize}
				\item[(a)] $\|u-u_1\|_{\infty}<\epsilon.$
				\item[(b)] For any $\epsilon'>0$, there exists $v_1\in \cup_{n=\max\{n_0,m_0\}+1}^{\infty}H_{2}^*(n)$ such that $\|v-v_1\|_{\infty}<\epsilon'$ and $(u_1,v_1)\notin H_{3}(\max\{n_0,m_0\}).$
			\end{itemize} 
		\end{enumerate} Then for any sequence $(\omega_n)_{n=1}^{\infty}$ of strictly positive real numbers, there exists $(u^*,v^*)\in U\times V$ such that $\Delta_n(\Phi_{u^*,v^*})\leq \omega_n$ for all $n$ sufficiently large and $\Phi_{u^*,v^*}$ contains no exact overlaps.
\end{thm}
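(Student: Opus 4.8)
The plan is to realise $(u^*,v^*)$ as the limit of a sequence $(u_k,v_k)_{k\ge 0}$ in $U\times V$ built by a recursion that repeatedly pushes exact overlaps in the two subfamilies $\{\Phi_u\}_{u\in U}$ and $\{\Phi_v\}_{v\in V}$ to deeper and deeper levels, while using the clause $(u_1,v_1)\notin H_3(\max\{n_0,m_0\})$ in hypothesis (2) to keep the combined system $\Phi_{u,v}$ free of exact overlaps up to a level tending to $\infty$. Since $\Phi_{u^*}\subseteq\Phi_{u^*,v^*}$ and $\Phi_{v^*}\subseteq\Phi_{u^*,v^*}$, the minimum defining $\Delta_n$ for the combined system is over a larger index set, so $\Delta_n(\Phi_{u^*,v^*})\le\min\{\Delta_n(\Phi_{u^*}),\Delta_n(\Phi_{v^*})\}$; hence it suffices to arrange that $\Delta_n(\Phi_{u^*,v^*})>0$ for every $n$ and that for each large $n$ one of $\Delta_n(\Phi_{u^*})\le\omega_n$, $\Delta_n(\Phi_{v^*})\le\omega_n$ holds.

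The engine converting "deep overlaps" into "close cylinders" is the following continuity fact. Suppose $\phi_{\a,u_0}=\phi_{\b,u_0}$ for some $\a,\b\in\I_1^{N}$. Near $u_0$ the ratios $r_{i,1}(u)$ stay bounded away from $1$ and the translations $t_{i,1}(u)$ stay bounded, so the attractors of $\Phi_u$ all lie in a fixed ball; writing $\phi_{\a\c,u}=\phi_{\a,u}\circ\phi_{\c,u}$ and using that orthogonal matrices are isometries, the ratio and rotation contributions to $d(\phi_{\a\c,u},\phi_{\b\c,u})$ are independent of $\c$ and the translation contribution is bounded by a constant times the translation discrepancy, so $\sup_{\c}d(\phi_{\a\c,u},\phi_{\b\c,u})\to 0$ as $u\to u_0$, the supremum over all finite words $\c$ over $\I_1$. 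Hence $\sup_{m\ge N}\Delta_m(\Phi_u)\to 0$ as $u\to u_0$: for every $\varepsilon>0$ there is a neighbourhood of $u_0$ on which $\Delta_m(\Phi_u)<\varepsilon$ for all $m\ge N$ simultaneously; the analogous statement holds for $\{\Phi_v\}$. Also, $d(\cdot,\cdot)$ being continuous, each $\Delta_n(\Phi_{(\cdot,\cdot)})$ is a finite minimum of continuous functions, hence continuous.

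For the recursion, use hypothesis (1) to choose $u_0\in H_1(n_0)$, $v_0\in H_2(m_0)$ with $n_0,m_0$ minimal, and set $L_0=\max\{n_0,m_0\}$. Given $(u_k,v_k)$ with $u_k\in H_1^*(n_k)$, $v_k\in H_2^*(m_k)$ and $L_k=\max\{n_k,m_k\}$, apply hypothesis (2) with $n_0,m_0$ replaced by $n_k,m_k$: first get $u_{k+1}$ as close to $u_k$ as we please with a fresh overlap at some $n_{k+1}>L_k$, then, taking $\varepsilon'$ as small as we please, get $v_{k+1}$ as close to $v_k$ as we please with a fresh overlap at some $m_{k+1}>L_k$ and with $(u_{k+1},v_{k+1})\notin H_3(L_k)$; put $L_{k+1}=\max\{n_{k+1},m_{k+1}\}$, so $L_k\uparrow\infty$. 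Alongside we fix a decreasing chain of open neighbourhoods $\mathcal N_k\ni(u_k,v_k)$ with $(u_j,v_j)\in\mathcal N_k$ for all $j\ge k$, chosen so that their diameters are summable (giving $(u_k,v_k)\to(u^*,v^*)\in U\times V$ with $(u^*,v^*)\in\mathcal N_k$ for all $k$); so that $\Delta_{L_k}(\Phi_{u,v})>0$ on $\mathcal N_{k+1}$, using continuity of $\Delta_{L_k}(\Phi_{(\cdot,\cdot)})$ and $(u_{k+1},v_{k+1})\notin H_3(L_k)$; and so that, by the continuity fact applied to $u_{k+1}\in H_1^*(n_{k+1})$ and to $v_{k+1}\in H_2^*(m_{k+1})$, one has $\Delta_m(\Phi_u)\le\omega_m$ on $\mathcal N_{k+1}$ for all $m$ in a block of integers with left endpoint $n_{k+1}$, and $\Delta_m(\Phi_v)\le\omega_m$ on $\mathcal N_{k+1}$ for all $m$ in a block with left endpoint $m_{k+1}$. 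From this the theorem follows: since $H_3(j)\subseteq H_3(L_k)$ for $j\le L_k$ and $(u^*,v^*)\in\mathcal N_{k+1}$, we get $\Delta_j(\Phi_{u^*,v^*})>0$ for all $j\le L_k$, hence $\Delta_n(\Phi_{u^*,v^*})>0$ for every $n$; and if an exact overlap $\phi_{\a}=\phi_{\b}$ with $\a\ne\b$ existed, then $\phi_{\a\b}=\phi_{\a}\circ\phi_{\b}=\phi_{\b}\circ\phi_{\a}=\phi_{\b\a}$ with $\a\b,\b\a$ of equal length, and $\a\b=\b\a$ would force $\a,\b$ to be powers of a common word $\mathbf{w}$, whence $\phi_{\mathbf{w}}^{s}=\phi_{\a}=\phi_{\b}=\phi_{\mathbf{w}}^{t}$ with $s\ne t$, impossible since $\phi_{\mathbf{w}}$ has ratio $<1$; so $\a\b\ne\b\a$ and we contradict $\Delta_{|\a|+|\b|}(\Phi_{u^*,v^*})>0$. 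Finally, if the blocks above are arranged to cover every sufficiently large $n$, then for each such $n$ one of $\Delta_n(\Phi_{u^*})\le\omega_n$, $\Delta_n(\Phi_{v^*})\le\omega_n$ holds, so $\Delta_n(\Phi_{u^*,v^*})\le\omega_n$.

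The crux is exactly the requirement that the integer blocks cover every large $n$. Hypothesis (2) bounds the depths $n_{k+1},m_{k+1}$ only from below, so they may exceed any quantity fixed before the $(k+1)$st perturbation; yet for the blocks of consecutive stages to abut, the block controlled by stage $k$ must be extended up to about $L_{k+1}$, which forces $\mathcal N_k$ to be shrunk to a size governed by $\omega_{L_{k+1}}$, which in turn restricts how close $(u_{k+1},v_{k+1})$ must be chosen to $(u_k,v_k)$ — a restriction only visible once $n_{k+1},m_{k+1}$ are known. The resolution uses that by hypothesis (2) the $(k+1)$st perturbation may be taken arbitrarily small: one performs it, reads off $L_{k+1}$, shrinks $\mathcal N_k$ accordingly, and — using that the cylinders of $\Phi_{u_{k+1}}$ and $\Phi_{v_{k+1}}$ are already forced close at all levels $\ge n_{k+1}$ resp. $\ge m_{k+1}$ — verifies that $(u_{k+1},v_{k+1})$ still lies in the shrunken $\mathcal N_k$, re-performing a smaller perturbation if necessary, so that all simultaneous demands on $\mathcal N_k$ are met. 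Carrying out this bookkeeping precisely is the technical heart of the argument; the remainder is continuity together with a diagonal choice of nested neighbourhoods.
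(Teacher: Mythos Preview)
Your overall architecture matches the paper's: realise $(u^*,v^*)$ as a point in a nested intersection of closed balls centred at points $(u_k,v_k)$ with $u_k\in H_1^*(n_k)$, $v_k\in H_2^*(m_k)$; use continuity of $d(\cdot,\cdot)$ together with monotonicity of $(\Delta_n)$ to turn the exact overlaps at $u_k,v_k$ into bounds $\Delta_m\le\omega_m$ on nearby parameters; and use $(u_{k+1},v_{k+1})\notin H_3(L_k)$ together with closedness of $H_3(L_k)$ to keep the limit out of every $H_3(n)$. Your reduction of arbitrary-length exact overlaps to equal-length ones via $\phi_{\a\b}=\phi_{\b\a}$ is a pleasant detail the paper leaves implicit.

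The genuine gap is precisely where you flag it. Your proposed resolution --- perform the $(k{+}1)$st perturbation, read off $L_{k+1}$, shrink $\mathcal N_k$ a posteriori, then ``re-perform a smaller perturbation if necessary'' --- carries no termination guarantee. Hypothesis~(2) gives no bound on $n_{k+1}$ in terms of $\epsilon$; indeed a smaller $\epsilon$ will typically force a \emph{larger} $n_{k+1}$, so the target $\min_{n_k\le m\le L_{k+1}}\omega_m$ you are chasing may recede faster than the perturbation shrinks. The observation that $\Delta_m(\Phi_{u_{k+1}})=0$ for $m\ge n_{k+1}$ does not help: the shrunken $\mathcal N_k$ is a ball around $(u_k,v_k)$, and membership of $u_{k+1}$ in its $U$-projection is governed by $\|u_{k+1}-u_k\|$, which must in particular be small enough to control $\Delta_m(\Phi_u)$ on the range $[n_k,n_{k+1}-1]$ --- a range whose length you do not control in advance.

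The paper dissolves the circularity not by iteration but by a \emph{staggered} choice of the two radii at each stage, exploiting the nested structure of hypothesis~(2). First choose $\epsilon_K>0$ so that $B(u_K,\epsilon_K)\subseteq\{u:\Delta_m(\Phi_u)\le\omega_m\ \text{for}\ n_K\le m\le L_K\}$, where $L_K=\max\{n_K,m_K\}$; this range is finite and already known. Then pick $u_{K+1}\in B(u_K,\epsilon_K)\cap H_1^*(n_{K+1})$ via (2a), which fixes $n_{K+1}$. Only now choose $\epsilon_K'>0$ so that $B(v_K,\epsilon_K')\subseteq\{v:\Delta_m(\Phi_v)\le\omega_m\ \text{for}\ m_K\le m\le n_{K+1}\}$; the right endpoint $n_{K+1}$ has just become available. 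Finally pick $v_{K+1}\in B(v_K,\epsilon_K')$ via (2b). Since $[n_K,L_K]\cup[m_K,n_{K+1}]\supseteq[n_K,n_{K+1}]$ in either case $m_K\le n_K$ or $m_K>n_K$, stage $K$ covers $[n_K,n_{K+1}]$ and hands off cleanly to stage $K{+}1$; no radius ever has to anticipate a depth produced later. The point you missed is that the $u$-radius need only reach $L_K$, not $L_{K+1}$: the interval $(L_K,n_{K+1}]$ is handled entirely by the $v$-radius, which is chosen \emph{after} $n_{K+1}$ is known.
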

We emphasise that the parameter $\epsilon'$ appearing in $2b.$ can be chosen to depend upon $u_1$. We will use this fact in our proof. 

Theorem \ref{Main thm} can be used to recover and strengthen the results of \cite{Bak2} and \cite{Chen}, see Example \ref{Example 1}. Importantly the argument given bypasses the need to rely on properties of continued fractions. The following corollary is also implied by this theorem.


\begin{cor}
\label{corollary}
Let $\{\Phi_u\}_{u\in U},\{\Phi_v\}_{v\in V}$ and $\{\Phi_{u,v}\}_{(u,v)\in U\times V}$ be parameterised families of iterated function systems as defined above. Suppose that the following properties are satisfied:
\begin{enumerate}
\item $\cup_{n=1}^{\infty}H_{1}(n)$ and $\cup_{n=1}^{\infty}H_{2}(n)$ are dense in $U$ and $V$ respectively. 
\item For any $n\in \N$ the sets $H_{1}(n)$ and $H_{2}(n)$ are both nowhere dense.
\item Let $n_0<n_1.$ For any $u\in H_{1}^{*}(n_1)$ the set $$\{v\in V:\phi_{\a,u,v}=\phi_{\b,u,v}\textrm{ for some }\a,\b\in \I_{3}^{n_0}, \a\neq \b\}$$ is nowhere dense.
\end{enumerate}
 Then for any sequence $(\omega_n)_{n=1}^{\infty}$ of strictly positive real numbers, there exists $(u^*,v^*)\in U\times V$ such that $\Delta_n(\Phi_{u^*,v^*})\leq \omega_n$ for all $n$ sufficiently large and $\Phi_{u^*,v^*}$ contains no exact overlaps.
\end{cor}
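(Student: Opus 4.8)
The plan is to deduce Corollary \ref{corollary} from Theorem \ref{Main thm} by checking that hypotheses (1)--(3) of the corollary imply the two numbered hypotheses of the theorem. Hypothesis (1) of the theorem is immediate: density of $\cup_n H_1(n)$ in $U$ and of $\cup_n H_2(n)$ in $V$ forces both unions to be non-empty (as $U$ and $V$ are non-empty open sets). The real content is in verifying hypothesis (2) of the theorem, and this is where a Baire category argument enters.

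First I would fix $u\in H_1(n_0)$, $v\in H_2(m_0)$, and $\epsilon>0$, and set $N:=\max\{n_0,m_0\}$. I need to produce $u_1$ in $\cup_{n=N+1}^\infty H_1^*(n)$ with $\|u-u_1\|_\infty<\epsilon$. Write $B_U(u,\epsilon)$ for the $\ell^\infty$-ball. By hypothesis (1) the set $\cup_{n=1}^\infty H_1(n)$ is dense, so it meets $B_U(u,\epsilon)$; by hypothesis (2) each $H_1(n)$ is nowhere dense, so $\cup_{n=1}^{N}H_1(n)$ is a finite union of nowhere dense sets and hence its complement is open and dense. Therefore $B_U(u,\epsilon)\setminus \cup_{n=1}^{N}H_1(n)$ is a non-empty open set, and it still meets $\cup_{n=1}^\infty H_1(n)$. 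Any point in that intersection lies in some $H_1(n)$ with $n>N$ but in none of $H_1(1),\dots,H_1(N)$; taking $u_1$ to be such a point, and letting $n_1$ be the least $n$ with $u_1\in H_1(n)$, we get $u_1\in H_1^*(n_1)$ with $n_1\geq N+1$, which is property 2(a). (One small point to note: $u_1\in H_1^*(n_1)$ requires $u_1\notin H_1(j)$ for all $j<n_1$; this is exactly how $n_1$ was chosen as the minimal index.)

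Next, with $u_1$ and $n_1$ now fixed, I would verify 2(b): given any $\epsilon'>0$ I must find $v_1\in \cup_{n=N+1}^\infty H_2^*(n)$ with $\|v-v_1\|_\infty<\epsilon'$ and $(u_1,v_1)\notin H_3(N)$. Here I use all three hypotheses together. Consider the ball $B_V(v,\epsilon')$. Remove from it the finitely many nowhere dense sets $H_2(1),\dots,H_2(N)$, and also remove the set $\{v\in V:\phi_{\a,u_1,v}=\phi_{\b,u_1,v}\text{ for some }\a,\b\in\I_3^{N},\,\a\neq\b\}$, which is nowhere dense by hypothesis (3) applied with $n_0=N<n_1$ (valid since $u_1\in H_1^*(n_1)$); note this last set is precisely the slice $\{v:(u_1,v)\in H_3(N)\}$. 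Removing these finitely many nowhere dense sets from the open ball $B_V(v,\epsilon')$ leaves a non-empty open set, which by density of $\cup_n H_2(n)$ contains a point $v_1\in H_2(n)$ for some $n$, necessarily $n>N$; letting $n'$ be the minimal such index gives $v_1\in H_2^*(n')$ with $n'\geq N+1$, while the construction ensures $\|v-v_1\|_\infty<\epsilon'$ and $(u_1,v_1)\notin H_3(N)$. This is exactly 2(b), and with $\epsilon'$ allowed to depend on $u_1$ — which it does here, through the slice set coming from hypothesis (3) — we are free to run this step after $u_1$ is chosen.

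Having checked both hypotheses of Theorem \ref{Main thm}, the conclusion of the corollary follows immediately from that theorem. I do not expect any genuine obstacle here: the only thing to be careful about is the bookkeeping with the starred sets $H_i^*(n)$ — making sure that after deleting the low-index sets one really lands in $H_i^*(n)$ for some large $n$ rather than merely in $H_i(n)$ — and the observation that the slice of $H_3(N)$ at a fixed $u_1$ is exactly the set appearing in hypothesis (3), so that its nowhere density is available. Both are routine once stated.
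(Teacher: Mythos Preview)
Your proposal is correct and is exactly the argument the paper intends: the paper does not give an explicit proof of the corollary but simply asserts that it is ``implied by'' Theorem~\ref{Main thm}, and your verification of hypotheses (1) and (2) of that theorem from hypotheses (1)--(3) of the corollary is the natural way to fill this in. One tiny expository point: when you write that the complement of $\bigcup_{n=1}^{N}H_1(n)$ is \emph{open} and dense, this uses not just nowhere density but also that each $H_i(n)$ is closed (Lemma~\ref{useful lemma}(3) in the paper); the same remark applies to the slice $\{v:(u_1,v)\in H_3(N)\}$, which is closed because $H_3(N)$ is.
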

The hypotheses appearing in Corollary \ref{corollary} are natural. For many parameterised families of overlapping IFSs the set of parameters causing exact overlaps are dense. For any $n\in \N$ the set $H_1(n)$ is closed, and so if it failed to be nowhere dense then there would be an non-empty open subset contained in $H_1(n)$. Such a set would mean that there exists a sizeable part of $U$ for which the corresponding IFSs have overlaps of length $n$ effectively built in. The same is true for $H_2(n)$. Similarly, if the third condition was not satisfied it would mean that there exists $u'\in H_{1}^{*}(n_1)$ for which the parameterised family of IFSs given by $\{\Phi_{u',v}\}_{v\in V}$ would effectively have exact overlaps of length $n_0$ built in within some non-empty open subset of $V$.

\subsection{Proof of Theorem \ref{Main thm}}
The following lemma records several elementary facts that we will need in our proof. These facts follow immediately from the definitions and their proofs are omitted.
\begin{lemma}
	\label{useful lemma}
	\begin{enumerate}
		\item For any IFS $\Phi$ the sequence $(\Delta_n(\Phi))_{n=1}^{\infty}$ is decreasing.
		\item Let $\Phi_1,\Phi_2,$ and $\Phi_3$ be three IFSs satisfying $\Phi_1\subseteq \Phi_3$ and $\Phi_2\subseteq \Phi_3$. Then for any $n\in \N$ we have $$\Delta_n(\Phi_{3})\leq \min \{\Delta_n(\Phi_1),\Delta_n(\Phi_2)\}.$$
		\item For any $n\in \N$ the sets $H_{1}(n)$, $H_{2}(n),$ and $H_3(n)$ are closed.
	\end{enumerate}
\end{lemma}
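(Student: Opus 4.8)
The three assertions are logically independent, so I would handle them one at a time.

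For the first claim, the natural move is to \emph{prepend} a fixed symbol rather than append one. Fix any $i\in\I$ and let $\a,\b\in\I^n$ with $\a\neq\b$ realise the minimum defining $\Delta_n(\Phi)$, and write $\phi_{i\a}=\phi_i\circ\phi_\a$, $\phi_{i\b}=\phi_i\circ\phi_\b$. A direct expansion shows that if $\phi_\a(x)=r_\a O_\a x+t_\a$ then $\phi_{i\a}(x)=r_ir_\a\,O_iO_\a x+(r_iO_it_\a+t_i)$, and similarly for $\b$. Since $O_i\in\O(d)$ is an isometry for both the Euclidean norm and the operator norm, the translation difference is $r_i|t_\a-t_\b|$, the log-ratio difference is $|\log r_\a-\log r_\b|$, and the orthogonal difference is $\|O_\a-O_\b\|$; as $r_i<1$ this gives $d(\phi_{i\a},\phi_{i\b})\le d(\phi_\a,\phi_\b)$. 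Because $i\a\neq i\b$, the pair $(i\a,i\b)$ is admissible at level $n+1$, whence $\Delta_{n+1}(\Phi)\le d(\phi_{i\a},\phi_{i\b})\le\Delta_n(\Phi)$. The identical computation works for the distance function on $\R$. I would remark that appending a symbol does not yield monotonicity cleanly, which is precisely why prepending is the correct choice.

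The second claim is purely set-theoretic. If $\Phi_1\subseteq\Phi_3$ then the index set of $\Phi_1$ is contained in that of $\Phi_3$, so every admissible pair $(\a,\b)$ at level $n$ for $\Phi_1$ is also admissible for $\Phi_3$, with the associated compositions literally equal. Hence the finite collection of distances whose minimum defines $\Delta_n(\Phi_3)$ contains the collection defining $\Delta_n(\Phi_1)$, and a minimum taken over a larger set can only decrease, giving $\Delta_n(\Phi_3)\le\Delta_n(\Phi_1)$. The same reasoning applies to $\Phi_2$, and taking the minimum of the two bounds finishes the claim.

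For the third claim the essential ingredient is continuity. I would first write $H_1(n)=\bigcup_{\a\neq\b}\{u\in U:\phi_{\a,u}=\phi_{\b,u}\}$ as a finite union over the finitely many pairs $\a,\b\in\I_1^n$, so that it suffices to prove each individual set closed. For fixed $\a,\b$ the coefficients of the composition $\phi_{\a,u}$ are built from $r_{i,1}(u)$, $O_{i,1}(u)$, $t_{i,1}(u)$ by finitely many sums and products, hence vary continuously in $u$; composing with the continuity of $d(\cdot,\cdot)$ recorded in the preliminaries shows that $u\mapsto d(\phi_{\a,u},\phi_{\b,u})$ is continuous on $U$. Using that $d$ vanishes exactly when its two arguments coincide, the set $\{u:\phi_{\a,u}=\phi_{\b,u}\}$ is the preimage of the closed set $\{0\}$ and is therefore closed in $U$, and a finite union of closed sets is closed. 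The arguments for $H_2(n)$ and $H_3(n)$ are verbatim over $V$ and $U\times V$. None of the three steps presents a genuine obstacle: the only point requiring a short computation is the isometry identity in the first claim, and the only bookkeeping is the continuity of parameter-to-composition in the third, which is why the paper omits the details.
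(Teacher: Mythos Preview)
Your proof is correct in all three parts; the paper itself omits the proof entirely, stating that the facts follow immediately from the definitions. Your write-up fills in exactly the details one would supply if pressed: the prepending computation for part~1 (including the observation that appending does not work as cleanly for the $\R^d$ distance), the subset-of-pairs argument for part~2, and the finite-union-of-preimages argument for part~3.
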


The proof of Theorem \ref{Main thm} relies on a simple strategy that we outline here. Given a sequence of positive real numbers $(\omega_n)$, it is a relatively easy task to construct an element $u^{*}\in U$ such that $\Delta_{n}(\Phi_{u^*})\leq \omega_n$ for all $n$ belonging to infinitely many long stretches of the natural numbers. This can be achieved by requiring $u^*$ be extremely well approximated at infinitely many scales by parameters that cause exact overlaps. The issue here is that there might exist gaps between those stretches of natural numbers for which $\Delta_{n}(\Phi_{u^*})\leq \omega_n$ holds. The trick is to use the family $\{\Phi_{v}\}_{v\in V}$ to find an element $v^*\in V$ for which the set of $n$ for which $\Delta_{n}(\Phi_{v^*})\leq \omega_n$ fills in these gaps. Ensuring $\Phi_{u^{*},v^*}$ contains no exact overlaps can be achieved by a topological argument.

In our proof we let $B(x,r)$ denote the open ball of radius $r$ centred at $x$ with respect to the infinity norm.

\begin{proof}[Proof of Theorem \ref{Main thm}]
Let us start by fixing $(\omega_n)_{n=1}^{\infty}$ a sequence of strictly positive real numbers. The element $(u^*,v^*)$ we construct will belong to the countable intersection of a nested collection of closed balls. We define these balls below via an inductive argument.\\

\noindent \textbf{Base case.} Pick $u_0\in H_{1}(n_0)$ and $v_0\in H_{2}(m_0)$ for some $n_0, m_0\in \N$. Both $u_0$ and $v_0$ exist by property $1.$ By definition there exists distinct $\a_0,\b_0\in \I_1^{n_0}$ such that $\phi_{\a_0,u_0}=\phi_{\b_0,u_0}$. Since the maps $U_{i,1}$, $r_{i,1}$, and $t_{i,1}$ are continuous, and the distance function $d$ is continuous, there exists $\epsilon_0>0$ such that \begin{equation*}
B(u_0,\epsilon_0)\subseteq \left\{u:d(\phi_{\a_0,u},\phi_{\b_0,u})\leq \min_{n_0\leq n\leq \max\{n_0,m_0\}}\omega_n\right\}.
\end{equation*} It follows from the definition of $\Delta_n(\cdot)$ and Lemma \ref{useful lemma}.1 that \begin{equation}
\label{inclusion1}
B(u_0,\epsilon_0)\subseteq\left\{u:\Delta_n(\Phi_{u})\leq \omega_n \textrm{ for all }n_0\leq n\leq \max\{n_0,m_0\}\right\}.
\end{equation} Let $u_1\in U$ be the element whose existence is guaranteed by property $2.$ for the choice of parameters $u_0,v_0$ and $\epsilon_0.$ Therefore $u_1\in H_{1}^*(n_1)$ for some $n_1>\max\{n_0,m_0\}$ and $u_1\in B(u_0,\epsilon_0)$.


Since $v_0\in H_{2}(m_0)$ there exists distinct $\a_0',\b_0'\in \I_{2}^{m_0}$ such that $\phi_{\a_0',v_0}=\phi_{\b_0',v_0}.$ By the same reasoning as above, we may pick $\epsilon_0'>0$ such that \begin{equation}
\label{inclusion2}
B(v_0,\epsilon_0')\subseteq \left\{v:\Delta_n(\Phi_{v})\leq \omega_n \textrm{ for all }m_0\leq n\leq n_1\right\}.
\end{equation} Since $u_1$ is the point whose existence is guaranteed by property $2$, we know that there exists $v_1\in H_{2}^*(m_{1})$ for some $m_1>\max\{n_0,m_0\}$ such that $v_1\in B(v_0,\epsilon_0')$ and $(u_1,v_1)\notin H_{3}(\max\{n_0,m_0\})$. By Lemma \ref{useful lemma}.3 we know that $H_{3}(\max\{n_0,m_0\})$ is closed. Therefore there exists sufficiently small $\delta_1>0$ such that 
\begin{equation}
\label{empty1}
\overline{B((u_1,v_1),\delta_1)}\cap H_{3}(\max\{n_0,m_0\})=\emptyset.
\end{equation}Moreover, since $u_1\in B(u_0,\epsilon_0)$ and $v_1\in B(v_0,\epsilon_0),$ we may also assume that $\delta_1$ is sufficiently small that
\begin{equation*}
\label{Super close level 1}
\overline{B((u_1,v_1),\delta_1)}\subseteq B(u_0,\epsilon_0)\times B(v_0,\epsilon_0').
\end{equation*} 
Therefore by \eqref{inclusion1}, \eqref{inclusion2}, and Lemma \ref{useful lemma}.2, we may deduce that $$\Delta_n(\Phi_{u,v})\leq \omega_n \textrm{ for all }n_0\leq n\leq n_1 $$ for all $(u,v)\in \overline{B((u_1,v_1),\delta_1)}.$\\


\noindent \textbf{Inductive step.} Suppose that we have constructed sequences $(u_k)_{k=0}^{K},$ $(v_k)_{k=0}^K$, $(n_k)_{k=0}^K$, $(m_k)_{k=0}^{\infty}$, and $(\delta_k)_{k=1}^K$ such that the following properties are satisfied:
\begin{enumerate}
	\item[(a)] $u_k\in H_{1}^*(n_k)$ and $v_k\in H_{2}^{*}(m_k)$ for all $1\leq k\leq K$.
	\item[(b)] Both $(n_k)_{k=0}^K$ and $(m_k)_{k=0}^{K}$ are strictly increasing sequences.
	\item[(c)] $(\overline{B((u_k,v_k),\delta_k)})_{k=1}^{K}$ is a nested sequence.
	\item[(d)]$ \overline{B((u_k,v_k),\delta_k)}\cap H_{3}(\max\{n_{k-1},m_{k-1}\})=\emptyset$ for all $1\leq k\leq K$.
	\item[(e)] For all $(u,v)\in \overline{B((u_K,v_K),\delta_K)}$ we have
	$$\Delta_{n}(\Phi_{u,v})\leq \omega_n \textrm{ for all }n_0\leq n\leq n_{K}.$$
\end{enumerate}In the base case we constructed the relevant sequences for $K=1$. We now show how these sequences are defined for $K+1$.\\

By definition there exists distinct $\a_K,\b_K\in \I_{1}^{n_K}$ such that $\phi_{\a_K,u_K}=\phi_{\b_K,u_K}.$ Therefore by analogous reasoning to that given in the base case, there exists $\epsilon_{K}>0$ such that
\begin{equation}
\label{inclusionK}
B(u_K,\epsilon_{K})\subseteq\left\{u:\Delta_n(\Phi_{u})\leq \omega_n \textrm{ for all }n_k\leq n\leq \max\{n_K,m_K\}\right\}\cap B(u_K,\delta_K).
\end{equation} 
Let $u_{K+1}\in U$ be the element whose existence is guaranteed by property $2.$ for the choice of parameters $u_{K},v_{K}$, and $\epsilon_{K}.$ Therefore $u_{K+1}\in H_{1}^{*}(n_{K+1})$ for some $n_{K+1}>\max\{n_{K},m_{K}\}$ and $u_{K+1}\in B(u_{K},\epsilon_{K}).$

Since $v_K\in H_{2}^*(m_K)$ there exists distinct $\a_{K}',\b_{K}'\in \I_{2}^{m_k}$ such that $\phi_{\a_{K}',v_{K}}=\phi_{\b_{K}',v_{K}}.$ By analogous reasoning to that given in the base case, it follows that there exists $\epsilon_{K}'>0$ such that 
\begin{equation}
\label{inclusionk2}
B(v_K,\epsilon_{K}')\subseteq\left\{v:\Delta_n(\Phi_{v})\leq \epsilon_n \textrm{ for all }m_k\leq n\leq n_{k+1}\right\}\cap B(v_K,\delta_K).
\end{equation}By the definition of $u_{K+1},$ we know that there exists $v_{K+1}\in H_{2}^*(m_{K+1})$ for some $m_{K+1}>\max\{n_{K},m_{K}\}$ such that $v_{K+1}\in B(v_K,\epsilon_K')$ and $(u_{K+1},v_{K+1})\notin H_{3}(\max\{n_K,m_K\}).$ By Lemma \ref{useful lemma}.3 we know that the set $H_{3}(\max\{n_K,m_K\})$ is closed. Therefore there exists $\delta_{K+1}>0$ such that $$\overline{B((u_{K+1},v_{K+1}),\delta_{K+1})}\cap H_{3}(\max\{n_K,m_K\})=\emptyset.$$ Since $u_{K+1}\in B(u_K,\epsilon_K)$ and $v_{K+1}\in B(v_K,\epsilon_K')$ we may also assume that $\delta_{K+1}$ is sufficiently small so that 
\begin{equation}
\label{extra inclusion}
\overline{B((u_{K+1},v_{K+1}),\delta_{K+1})}\subseteq B(u_K,\epsilon_K)\times B(v_K,\epsilon_K').
\end{equation} 
It follows from Lemma \ref{useful lemma}.2, \eqref{inclusionK}, \eqref{inclusionk2}, and \eqref{extra inclusion} that if $(u,v)\in \overline{B((u_{K+1},v_{K+1}),\delta_{K+1})}$ then $$\Delta_{n}(\Phi_{u,v})\leq \epsilon_n \textrm{ for }n_K\leq n\leq n_{K+1}.$$ Moreover, it follows from \eqref{inclusionK}, \eqref{inclusionk2}, and \eqref{extra inclusion} that $\overline{B((u_{K+1},v_{K+1}),\delta_{K+1})}\subseteq \overline{B((u_{K},v_{K}),\delta_{K})}$. Therefore by property $(e)$ we may conclude that $$\Delta_n(\Phi_{u,v})\leq \epsilon_n \textrm{ for }n_0\leq n\leq n_{K+1}$$ for all $(u,v)\in \overline{B((u_{K+1},v_{K+1}),\delta_{K+1})}$. For these choices of $u_{K+1},v_{K+1},n_{K+1},m_{K+1}$ and $\delta_{K+1}$ we see that properties $(a)-(e)$ are satisfied for level $K+1$. This completes our inductive step.\\ 

\noindent \textbf{Conclusion of proof.} Repeating the inductive step indefinitely we can define sequences $(u_k)_{k=0}^{\infty},$ $(v_k)_{k=0}^{\infty}$, $(n_k)_{k=0}^{\infty}$, $(m_k)_{k=0}^{\infty}$, and $(\delta_k)_{k=1}^{\infty}$ such that properties $(a)-(e)$ hold for any choice of $K$.

Since  $(\overline{B((u_k,v_k),\delta_k)})_{k=1}^{\infty}$ is a nested sequence of closed balls we have $$\bigcap_{k=1}^{\infty}\overline{B((u_k,v_k),\delta_k)}\neq \emptyset.$$ Taking $(u^*,v^*)$ in this intersection it follows from property $(e)$ that $\Delta_{n}(\Phi_{u^*,v^*})\leq \omega_n$ for all $n\geq n_0$. Moreover by property $(d)$ we may also conclude that $(u^*,v^*)\notin H_{3}(\max\{n_k,m_k\})$ for any $k\in \mathbb{N}$. Since $(n_k)_{k=1}^{\infty}$ and $(m_k)_{k=1}^{\infty}$ are both strictly increasing sequences and $H_3(n)\subseteq H_{3}(n')$ for $n\leq n',$ it follows that $(u^*,v^*)\notin H_{3}(n)$ for any $n$. Therefore $\Phi_{u^*,v^*}$ also contains no exact overlaps as required.


\end{proof}
\begin{remark}
	Adapting the proof of Theorem \ref{Main thm} it can be shown that the set $$\left\{(u,v):\Phi_{u,v}\textrm{ contains no exact overlaps and }\Delta_n(\Phi_{u,v})\leq \omega_n \textrm{ for all }n \textrm{ sufficiently large}\right\}$$ is dense in $U\times V$ under the additional assumption that both $\cup_{n=1}^{\infty}H_{1}(n)$ and $\cup_{n=1}^{\infty}H_{2}(n)$ are dense in $U$ and $V$ respectively.
\end{remark}
\begin{remark}
In the statement of Theorem \ref{Main thm} it is possible to strengthen the assertion $\Delta_n(\Phi_{u^*,v^*})\leq \omega_n$ for all $n$ sufficiently large. A careful inspection of the proof shows that we can assert that there exists $(u^*,v^*)$ such that $\Delta_n(\Phi_{u^*,v^*})\leq \omega_n$ for all $n\geq \min\{n:H_{1}(n)\neq \emptyset\}.$
\end{remark}

\section{Examples}
In this section we detail two examples of parameterised families of IFSs to which we can apply Theorem \ref{Main thm}. Our first family does not impose any algebraic conditions on the similarity ratio and this quantity can be taken to be transcendental. This is particularly relevant given the aforementioned result of Rapaport \cite{Rap} which establishes the exact overlaps conjecture when the underlying contraction ratios are all algebraic. The second family is inspired by the Bernoulli convolution and establishes the existence of a non-equicontractive IFS with super-exponentially close cylinders and no exact overlaps.
 
Note that the function $\Delta_n(\cdot)$ appearing in Theorem \ref{Main thm} is defined using a distance function for which similarities with different contraction ratios can be close. For the examples given below we may insist $\Delta_n(\cdot)$ be defined using the following stricter distance function and still guarantee the existence of $\Phi_{\u^*,\v^*}$ without exact overlaps such that $(\Delta_n(\Phi_{\u^*,\v^*}))_{n=1}^{\infty}$ converges to zero at any desired speed. Given two contracting similarities $\phi(\x)=r\cdot O\x +t$ and $\phi'(\x)=r'\cdot O'\x+t'$ acting on $\mathbb{R}^d$ we define our stricter distance function via the equation $$d(\phi,\phi'):=\begin{cases}
\infty,\, &r\neq r' \textrm{ or } O\neq O'\\
|t-t'|,\, &r=r'\textrm{ and }O=O'.
\end{cases}$$

\begin{example}
	\label{Example 1}
In this example $\lambda\in(0,1/2]$ is some fixed parameter. We could take $U=V=\mathbb{R}^d$, however for ease of exposition it is more convenient to set $$U=V:=\{(x_1,\ldots,x_d)\in \R^d: x_j\notin\{0,1\}\,\textrm{ for all } 1\leq j\leq d\}.$$ Given a vector $\u=(u_1,\ldots, u_d)\in U$ we associate the IFS $$\Phi_{\u}:=\left\{\lambda(\x +\a):\a\in \prod_{j=1}^d\{0,1,u_j,1+u_j\}\right\}.$$ Similarly, given $\v=(v_1,\ldots,v_d)\in V$ we let $$\Phi_{\v}:=\left\{\lambda( \x +\a):\a\in \prod_{j=1}^d\{0,1,v_j,1+v_j\}\right\}.$$ Moreover, given $\u$ and $\v$ let $\mathbf{T}_{\u,\v}\subset \R^d$ be any finite set of vectors satisfying $$\prod_{j=1}^d\{0,1,u_j,1+u_j\}\cup \prod_{j=1}^d\{0,1,v_j,1+v_j\}\subseteq \mathbf{T}_{u,v}\subseteq \prod_{j=1}^d\{0,1,u_j,v_j,1+u_j,1+v_j,u_j+v_j, 1+u_j+v_j\}.$$ Let $$\Phi_{\u,\v}=\left\{\lambda(\x+\a):\a\in \mathbf{T}_{\u,\v}\right\}.$$ Then $\{\Phi_{\u}\}, \{\Phi_{\v}\}$ and $\{\Phi_{\u,\v}\}$ are three parameterised families of IFSs as considered in Theorem \ref{Main thm}.

After a little calculation it can be shown that $\u\in H_{1}(n)$ if and only if\footnote{Here we use our assumption $\lambda\in (0,1/2]$.} $$\u\in \bigcup_{j=1}^d\left\{\u\in U: u_j=\frac{\sum_{i=1}^n\kappa_i\lambda^i}{\sum_{i=1}^n \delta_i\lambda^i},\, (\kappa_i)_{i=1}^n, (\delta_i)_{i=1}^n\in\{-1,0,1\}^n\setminus\{(0)^n\}\right\}.$$ Obviously the same holds for $H_{2}(n)$ with the parameter $\v$ replacing $\u.$ This shows that $H_{1}(n)$ and $H_{2}(n)$ are both non-empty for any $n\in \N$. Therefore property $1.$ from Theorem \ref{Main thm} holds for these families. 

To see that property $2$ from Theorem \ref{Main thm} holds for this family we fix $\u\in H_{1}(n_0),$ $\v\in H_{2}(m_0)$ and $\epsilon>0$. Let $L_0:=\max\{n_0,m_0\}$ and $$\mathcal{D}(\u)=\left\{j:u_j=\frac{\sum_{i=1}^{L_0}\kappa_i\lambda^i}{\sum_{i=1}^{L_0} \delta_i\lambda^i},\, \textrm{ for some } (\kappa_i)_{i=1}^{L_0}, (\delta_i)_{i=1}^{L_0}\in\{-1,0,1\}^{L_0}\setminus\{(0)^{L_0}\}\right\}.$$ $D(\u)$ is the set of coordinate positions of $\u$ that give rise to an exact overlap caused by two distinct words of length at most $L_0$. Since $\u\in H_{1}(n_0)$ it follows from the characterisation of $H_{1}(n_0)$ above that the set $D(\u)$ is non-empty. For each $N\in\N$, if $j\in \mathcal{D}(\u)$ and $$u_j=\frac{\sum_{i=1}^{L_0}\kappa_i\lambda^i}{\sum_{i=1}^{L_0} \delta_i\lambda^i}$$ then we define $$u_{j,N}:=\frac{\sum_{i=1}^{L_0}\kappa_i\lambda^i+\lambda^{N}}{\sum_{i=1}^{L_0} \delta_i\lambda^i},$$ and for all $j\notin \mathcal{D}(\u)$ we set $u_{j,N}:=u_j$. For each $N\in\N$ we then let $\u_{N}:=(u_{1,N},\ldots,u_{d,N})$. The set $$\left\{\frac{\sum_{i=1}^{L_0}\kappa_i\lambda^i}{\sum_{i=1}^{L_0} \delta_i\lambda^i},\, \textrm{ for some } (\kappa_i)_{i=1}^{L_0}, (\delta_i)_{i=1}^{L_0}\{-1,0,1\}^{L_0}\setminus\{(0)^{L_0}\}\right\}$$ is finite. Therefore for any $N$ sufficiently large 
the parameter $u_{j,N}$ does not belong to this set for any $j$. It follows from the characterisation of $H_{1}(n)$ stated above that $\u_{N}\notin H_{1}(L_0)$ for $N$ sufficiently large. Using the characterisation of $H_{1}(n)$ stated above again, and the fact $H_{1}(L_0)=\cup_{n=1}^{L_0}H_{1}(n),$ it follows that $\u_N\in \cup_{n=L_0+1}^{\infty}H_{1}^*(n)$ for $N$ sufficiently large. Moreover, we may also assume that $N$ is sufficiently large that $\u_{N}$ satisfies $\|\u-\u_{N}\|_{\infty}<\epsilon.$  We define $\u_1=\u_{N}$ for any $N$ sufficiently large such that both of these properties hold.

Let $\u_1=(u_{1,1},\ldots, u_{d,1})$. We've shown that $\u_1$ satisfies $2a$ from Theorem \ref{Main thm}. To show that property $2b$ holds for this choice of $\u_1$ let us fix $\epsilon'>0$. By a simple calculation, it can be shown that for $\v^*\in V$, if $(\u_1,\v^*)\in H_{3}(L_0)$ then 
\begin{align*}\v^*\in \bigcup_{j=1}^{d}\Bigg\lbrace\v\in V: v_j=\frac{\sum_{i=1}^{L_0}\kappa_i\lambda^i+u_{j,1}\sum_{i=1}^{L_0}\delta_i\lambda^i}{\sum_{i=1}^{L_0}\gamma_i\lambda^i},&\text{ for some } (\gamma_i)\in\{-1,0.1\}^{L_0}\setminus\{(0)^{L_0}\}\\
&\textrm{ and }(\kappa_i),(\delta_i)\in\{-1,0,1\}^{L_0} \Bigg\rbrace.
\end{align*} For our previously fixed value of  $\v$ let
$$\mathcal{D}'(\v)=\left\{j:v_j=\frac{\sum_{i=1}^{L_0}\kappa_i\lambda^i}{\sum_{i=1}^{L_0} \delta_i\lambda^i} \textrm{ for some }(\kappa_i),(\delta_i)\in\{-1,0,1\}^{L_0}\setminus\{(0)^{L_0}\}\right\}$$ 
and
$$\mathcal{D}''(\v)=\left\{j:v_j=\frac{\sum_{i=1}^{L_0}\kappa_i\lambda^i+u_{j,1}\sum_{i=1}^{L_0}\delta_i\lambda^i}{\sum_{i=1}^{L_0}\gamma_i\lambda^i}, (\gamma_i)\in\{-1,0,1\}^{L_0}\setminus\{(0)^{L_0}\}, (\kappa_i),(\delta_i)\in\{-1,0,1\}^{L_0}\right\}.$$ Since $\v\in H_{2}(m_0)$ it follows that $\v\in H_{2}(L_0)$ and therefore $\mathcal{D}'(\v)$ is non-empty. 

For each $N\in \N,$ if $j\in \mathcal{D'}(\v)$ and $$v_{j}=\frac{\sum_{i=1}^{L_0}\kappa_i\lambda^i}{\sum_{i=1}^{L_0} \delta_i\lambda^i},$$ then we define 
$$v_{j,N}:=\frac{\sum_{i=1}^{L_0}\kappa_i\lambda^i+\lambda^{N}}{\sum_{i=1}^{L_0} \delta_i\lambda^i},$$
if $j\in \D''(\v)\setminus \D'(\v)$ and $$v_j=\frac{\sum_{i=1}^{L_0}\kappa_i\lambda^i+u_{j,1}\sum_{i=1}^{L_0}\delta_i\lambda^i}{\sum_{i=1}^{L_0}\gamma_i\lambda^i},$$then define $$v_{j,N}:=\frac{\sum_{i=1}^{L_0}\kappa_i\lambda^i+\lambda^{N}+u_{j,1}\sum_{i=1}^{L_0}\delta_i\lambda^i}{\sum_{i=1}^{L_0}\gamma_i\lambda^i},$$ and if $j\notin \mathcal{D}'(\v)\cup\mathcal{D}''(\v)$ then $$v_{j,N}:=v_j.$$ For each $N\in \N$ set $\v_{N}:=(v_{1,N},\ldots,v_{j,N})$.

The sets $$\left\{\frac{\sum_{i=1}^{L_0}\kappa_i\lambda^i}{\sum_{i=1}^{L_0} \delta_i\lambda^i},(\kappa_i), (\delta_i)\in\{-1,0,1\}^{L_0}\setminus\{(0)^{L_0}\}\right\}$$ and $$\left\{\frac{\sum_{i=1}^{L_0}\kappa_i\lambda^i+u_{j,1}\sum_{i=1}^{L_0}\delta_i\lambda^i}{\sum_{i=1}^{L_0}\gamma_i\lambda^i}, (\gamma_i)\in\{-1,0.1\}^{L_0}\setminus\{(0)^{L_0}\}, (\kappa_i), (\delta_i)\in\{-1,0,1\}^{L_0}\right\}$$ are both finite. Therefore for any $N$ sufficiently large, $v_{j,N}$ does not belong to either of these sets for any $j$. Therefore $\v_{N}\in \cup_{n=L_0+1}^{\infty}H_{2}^*(n)$ and $(\u_1,\v_{N})\notin H_3(L_0)$ for $N$ sufficiently large. Moreover, for $N$ sufficiently large we also have $\|\v-\v_{N}\|_{\infty}<\epsilon'$. We take $\v_{1}=\v_{N}$ for any $N$ sufficiently large such that each of these properties hold. For this choice of $\v_1$ we see that property $2b$ from Theorem \ref{Main thm} is satisfied. As such this theorem can be applied to conclude the existence of an IFS $\Phi_{\u^*,\v^*}$ within this family without exact overlaps such that $(\Delta_n(\Phi_{\u^*,\v^*}))$ converges to zero arbitrarily fast.

\end{example}

\begin{example}
In this example $U=V=(1/2,1).$ For $u\in (1/2,1)$ we let $$\Phi_{u}:=\{u(x+1),u(x+2)\},$$ and for $v\in (1/2,1)$ we let $$\Phi_{v}:=\{v(x+1),v(x+2)\}.$$ Moreover, for $(u,v)\in (1/2,1)\times (1/2,1)$ let $$\Phi_{u,v}:=\{u(x+1),u(x+2),v(x+1),v(x+2)\}.$$ It is straightforward to show that $u\in H_{1}(n)$ if and only if $$u\in \left\{u:\sum_{i=1}^n\kappa_iu^i=0 \textrm{ for some }(\kappa_i)\in\{-1,0,1\}^n\setminus \{(0)^n\}\right\}.$$ An equivalent characterisation exists for $H_{2}(n)$ with $u$ replaced by $v$. The set of zeros of non-zero polynomials with coefficients belonging to the set $\{-1,0,1\}$ is well known to be dense in $(1/2,1)$. Therefore $\cup_{n=1}^{\infty}H_{1}(n)$ and $\cup_{n=1}^{\infty}H_{2}(n)$ are both non-empty and property $1$ from Theorem \ref{Main thm} is satisfied. To show that property $2$ holds for this family fix $u\in H_{1}(n_0), v\in H_{2}(m_0)$ and $\epsilon>0$. Let $L_0=\max\{n_0,m_0\}$. Since the set of zeros of non-zero polynomials with coefficients belonging to the set $\{-1,0,1\}$ is dense, we can choose $u_1\in \cup_{n=L_0+1}^{\infty}H_{1}^{*}(n)$ such that $\|u-u_1\|_{\infty}<\epsilon$ and $u_1$ is not a zero for any non-zero polynomial of degree at most $L_0$ with coefficients belonging to the set $\{-2,-1,0,1,2\}$. Here we used the fact that the set of zeros of non-zero polynomials of degree at most $L_0$ with coefficients belonging to the set $\{-2,-1,0,1,2\}$ is a finite set. We've shown that property $2a$ from Theorem \ref{Main thm} is satisfied by this choice of $u_1$. It remains to show that property $2b.$ is also satisfied. As such let us fix $\epsilon'>0$. For an arbitrary choice of $v^*\in V$ one can check that if $(u_1,v^*)\in H_{3}(L_0),$ then there exists $(\kappa_{i,0}),\ldots,(\kappa_{i,L_0})\in \{-2,-1,0,1,2\}^{L_0+1}$ such that one of these sequences is non-zero and 
\begin{equation}
\label{Polynomial}
\sum_{i=0}^{L_0}\kappa_{i,0}(u_1)^i+\left(\sum_{i=0}^{L_0}\kappa_{i,1}(u_1)^i\right)v^*+\cdots \left(\sum_{i=0}^{L_0}\kappa_{i,L_0}(u_1)^i\right)(v^*)^{L_0}=0.
\end{equation} 

Equation \eqref{Polynomial} shows that if $(u_1,v^*)\in H_{3}(L_0)$ then $v^{*}$ is the zero of a polynomial of degree $L_0$ with coefficients belonging to the set $\{\sum_{i=0}^{L_0}\kappa_i(u_1)^i:(\kappa_i)\in\{-2,-1,0,1,2\}^{L_0+1}\}.$ Moreover, since one of $(\kappa_{i,0}),\ldots,(\kappa_{i,L_0})$ is non-zero and $u_1$ is not the zero of any polynomial of degree at most $L_0$ with coefficients belonging to the set $\{-2,-1,0,1,2\},$ we may conclude that if $(u_1,v^{*})\in H_{3}(L_0)$ then $v^{*}$ is the root of a non-zero polynomial with coefficients belonging to the set  $\{\sum_{i=0}^{L_0}\kappa_i(u_1)^i:(\kappa_i)\in\{-2,-1,0,1,2\}^{L_0+1}\}$ of degree at most $L_0$. There are only finitely many such $v^*$. Therefore appealing to the density of $\cup_{n=1}^{\infty}H_{2}(n)$ we may pick $v_1\in \cup_{n=L_0+1}^{\infty}H_{2}^{*}(n)$ such that $\|v-v_1\|_{\infty}<\epsilon'$ and $(u_1,v_1)\notin H_{3}(L_0)$. Therefore property $2b$ is satisfied for this choice of $v_1.$ By Theorem \ref{Main thm} there exists $\Phi_{\u^*,\v^*}$ within this family with no exact overlaps such that $(\Delta_n(\Phi_{\u^*,\v^*}))$ converges to zero arbitrarily fast.
\end{example}


\begin{thebibliography}{100}
\bibitem{Bak2} S. Baker, \textit{Iterated function systems with super-exponentially close cylinders}, arXiv:1909.04343 [math.DS].
\bibitem{Bak} S. Baker, \textit{Overlapping iterated function systems from the perspective of Metric Number Theory,} 	arXiv:1901.07875 [math.DS].
\bibitem{BarKae} B. B\'{a}r\'{a}ny, A. K\"{a}enm\"{a}ki, \textit{Superexponential condensation without exact overlaps,} arXiv:1910.04623 [math.DS].
\bibitem{Chen} C. Chen, \textit{Self-similar sets with super-exponential close cylinders,} 	arXiv:2004.14037 [math.CA].
\bibitem{FengHu} D-J. Feng, H. Hu, \textit{Dimension theory of iterated function systems. }
Comm. Pure Appl. Math. 62 (2009), no. 11, 1435-–1500. 
\bibitem{Hochman2} M. Hochman, \textit{Dimension theory of self-similar sets and measures,} In B. Sirakov, P. N. de Souza, and M. Viana, editors, Proceedings of the International Congress of Mathematicians (ICM 2018), pages 1943-–1966. World
Scientific, 2019.
\bibitem{Hochman} M. Hochman, \textit{On self-similar sets with overlaps and inverse theorems for entropy,} Ann. Math., {\bf 180}, (2014), 773--822.
\bibitem{Hochman3} M. Hochman, \textit{On self-similar sets with overlaps and inverse theorems for entropy in $\R^d$}, Mem. Amer. Math. Soc (to appear). 
\bibitem{Hut}  J. Hutchinson, \textit{Fractals and self-similarity}, Indiana Univ. Math. J. 30 (1981), no. 5, 713–-747.
\bibitem{Rap} A. Rapaport, \textit{Proof of the exact overlaps conjecture for systems with algebraic contractions,} arXiv:2001.01332 [math.DS].
\bibitem{Shm} P. Shmerkin, \textit{$L^q$ dimensions of self-similar measures, and applications: a survey,} 	arXiv:1907.07121 [math.DS].
\bibitem{Shm2} P. Shmerkin, \textit{On Furstenberg's intersection conjecture, self-similar measures, and the $L^q$ norms of convolutions,} Ann. of Math. (2) 189 (2019), no. 2, 319–-391. 
\bibitem{Varju} P. Varju, \textit{On the dimension of Bernoulli convolutions for all transcendental parameters,} Ann. of Math. (2) 189 (2019), no. 3, 1001-–1011.
\end{thebibliography}
\end{document}